\numberwithin{equation}{section}
\newtheorem{theorem}{Theorem}[section]
\newtheorem{conjecture}[theorem]{Conjecture}
\newtheorem{problem}[theorem]{Problem}
\begin{document}
\title{\bf A Connection between the Riemann Hypothesis and Uniqueness of the Riemann zeta function }

\author{ Pei-Chu Hu  and Bao Qin Li}
\date{}
%\date{e-mail: pchu@sdu.edu.cn}
\maketitle

\vspace{3mm}

Department of Mathematics, Shandong University, Jinan 250100,

Shandong, P. R. China   E-mail: pchu@sdu.edu.cn

\vspace{3mm}

Department of Mathematics and Statistics, Florida International

University, Miami, FL 33199 USA    E-mail: libaoqin@fiu.edu

\vspace{1cm}

%\tableofcontents
%\newpage
%\centerline{\bf \large Abstract}
%\vspace{10mm}
\begin{abstract}
In this paper, we give a connection between the Riemann hypothesis and uniqueness of the Riemann zeta function and an analogue for L-functions.
\end{abstract}

\pagenumbering{arabic}

\begin{figure}[b]
\rule[-2.5truemm]{5cm}{0.1truemm}\\[2mm] {\footnotesize
Mathematics Subject Classification 2000 (MSC2000): 11M36, 30D35. \\
Key words and phrases:  Riemann zeta function; Riemann hypothesis, uniqueness, the Dedekind zeta function, L-function, Riemann functional equation.
\\ %*Supported in part by NSFC  (no.11271227)  and by PCSIRT (no.IRT1264).
\\}
\end{figure}

\section{Introduction}
The Riemann $\zeta$ function is defined by the Dirichlet series
\begin{equation}\label{zeta-Dirichletser}
\zeta(s)=\sum_{n=1}^{\infty} \frac{1}{n^{s}},\quad s=\sigma+it
\end{equation}
for ${\rm Re}(s)>1$, which is absolutely convergent, and admits an analytical continuation as a meromorphic function in the complex plane $\mathbb{C}$ of order $1$, which has only a simple pole at $s=1$ with residue equal to $1$. It satisfies the following Riemann functional equation:
\begin{equation}\label{r-equation}
\zeta(1-s)=2(2\pi)^{-s}\cos\left(\frac{\pi
s}{2}\right)\Gamma(s)\zeta(s),
\end{equation}
where $\Gamma$ is the Euler gamma function
$$
\Gamma(z) =\int_0^{\infty}t^{z-1}e^{-t}dt, \ \   \ \ \mbox{Re} z
>0,
$$
analytically continued as a meromorphic function in $\mathbb{C}$ of order $1$ without any zeros and with simple poles at $s=0$, $-1$, $-2$, $\cdots$.

The allied function
\begin{equation}\label{xi}
\xi(s)=\frac{s}{2}(s-1)\pi^{-\frac{s}{2}}\Gamma\left(\frac{s}{2}\right)\zeta(s)
\end{equation}
is an entire function of order equal to $1$ satisfying the functional equation
\begin{equation}\label{xi-func-eq}
\xi(1-s)=\xi(s)
\end{equation}
(see e.g. \cite{T2}, p.16 and p.29).
\vspace{3mm}

It is easy to see that $\zeta(s)$ has no zeros for ${\rm Re}(s)>1$ and, by the functional equation, the only zeros of $\zeta(s)$ in the domain ${\rm Re}(s)<0$ are the poles of $\Gamma(s/2)$. These are called the
{\it trivial zeros}\index{trivial zero} of $\zeta(s)$. Other zeros, called {\it nontrivial zeros}, lie in the {\it critical strip}\index{critical strip} $0\leq{\rm Re}(s)\leq 1$ (actually lie in the open strip $0<{\rm Re}(s)<1$). It is a well-known theorem of G. H. Hardy that there are an infinity of
zeros on ${\rm Re}(s)=\frac{1}{2}$. The famous, as  yet unproven, Riemann hypothesis states as follows:

\begin{conjecture}[Riemann Hypothesis]
The nontrivial zeros of $\zeta(s)$ lie on the line ${\rm
Re}(s)=\frac{1}{2}$.
\end{conjecture}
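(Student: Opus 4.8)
The plan is to attack the Riemann Hypothesis by passing to the completed function $\xi(s)$ of \eqref{xi}, whose zeros coincide exactly with the nontrivial zeros of $\zeta(s)$ and which satisfies the symmetry $\xi(1-s)=\xi(s)$ of \eqref{xi-func-eq}. Setting $\xi(s)=\Xi(z)$ with $s=\tfrac12+iz$ turns the critical line $\mathrm{Re}(s)=\tfrac12$ into the real axis $\mathrm{Im}(z)=0$, and the functional equation forces $\Xi$ to be an even entire function of order $1$ whose zeros are symmetric about that axis. The target is then reduced to a clean statement: every zero of $\Xi$ is real. First I would seek a Hilbert--P\'olya realization, constructing a self-adjoint operator $H$ on a Hilbert space whose eigenvalues are precisely the numbers $z$ with $\Xi(z)=0$; self-adjointness makes the spectrum real, which would place every nontrivial zero of $\zeta$ on the critical line.

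Failing an operator-theoretic construction, I would turn to a positivity criterion. The idea is to use the Weil explicit formula, which expresses a weighted sum over the nontrivial zeros in terms of an archimedean contribution and a sum over prime powers, and to show that the associated quadratic form is nonnegative for every admissible test function; by Weil's criterion this nonnegativity is equivalent to the reality of all zeros of $\Xi$. Equivalently, one could try to establish the positivity of all the Li coefficients $\lambda_n=\sum_{\rho}\bigl(1-(1-1/\rho)^n\bigr)$, the sum running over nontrivial zeros, since $\lambda_n\ge 0$ for every $n\ge 1$ holds if and only if the Riemann Hypothesis is true. The final step would convert either positivity statement, together with the order-$1$ Hadamard factorization of $\xi$, into the assertion that no zero can have $\mathrm{Re}(s)\neq\tfrac12$.

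The hard part will be every step that actually locates the zeros, and here I must be candid. No Hilbert--P\'olya operator has ever been constructed, and neither the explicit-formula positivity nor the positivity of the Li coefficients has been verified by any independent argument; each is a faithful restatement of the Riemann Hypothesis, not a lever for proving it. The core obstacle is that the robust information we possess about the nontrivial zeros---the functional equation, the order-$1$ growth, and the zero-free region $\mathrm{Re}(s)>1$ with its known thin extension---confines the zeros to the open critical strip but provides no mechanism to exclude a zero off the line $\mathrm{Re}(s)=\tfrac12$. For this reason I do not expect the plan to yield an unconditional proof; what it can honestly deliver is a reformulation of the Riemann Hypothesis as an equivalent spectral or positivity condition, which is precisely the spirit in which the present paper connects it to a uniqueness property of $\zeta$.
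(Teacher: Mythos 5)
Your proposal is not a proof, and to your credit you say so explicitly in the final paragraph --- but it is worth being precise about why, and about how this compares with what the paper actually does. The statement you were given is the Riemann Hypothesis itself, which appears in the paper only as Conjecture 1.1; the paper contains no proof of it, and none exists. What the paper proves instead is a chain of equivalences: it constructs from the known critical-line zeros $s_\nu$ the entire function $h(s)=\frac{1}{2}\prod_{\nu=1}^\infty\bigl(1-\frac{s-s^2}{|s_\nu|^2}\bigr)$ of \eqref{h-symmetry}, defines $\eta$ by the $\xi$-type completion \eqref{eta}, and shows (Theorem \ref{RH-eqthm}) that RH holds if and only if $\zeta\equiv\eta$, and further (via the uniqueness Theorem \ref{M-thm}, with its sharp exceptional-set bound $\frac{\log 4}{\pi}$) if and only if $\lim_{\sigma\to+\infty}\eta(s)=1$. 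Every step that would actually force a zero onto the line $\mathrm{Re}(s)=\frac{1}{2}$ is deliberately packaged into the unverified hypothesis $\zeta\equiv\eta$; the paper is explicit that it delivers a reformulation, not a resolution.

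Your route is a genuinely different family of reformulations --- Hilbert--P\'olya self-adjointness, Weil explicit-formula positivity, nonnegativity of the Li coefficients $\lambda_n$ --- and each equivalence you cite is correct as stated. But each is exactly as open as RH, and your proposal supplies no independent lever for any of them: no candidate operator, no positivity estimate on the prime-power side of the explicit formula, no lower bound on $\lambda_n$. The concrete gap, which you yourself identify, is that the functional equation \eqref{xi-func-eq}, the order-$1$ Hadamard structure of $\xi$, and the known zero-free regions confine the nontrivial zeros to the open critical strip but contain no mechanism excluding a zero with $\mathrm{Re}(s)\neq\frac{1}{2}$; converting ``symmetric about the line'' into ``on the line'' is the entire problem, in your framework and in the paper's alike. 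So the honest verdict is: your proposal and the paper occupy the same epistemic position --- each trades RH for an equivalent unproven condition --- but via disjoint machinery (spectral/positivity criteria versus shared-zero uniqueness of solutions of the Riemann functional equation), and your closing assessment that no unconditional proof emerges is the correct one.
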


The uniqueness problem for the Riemann zeta function (more generally, for L-functions, see below) is to study how the Riemann zeta function $\zeta$ (or an L-function) is uniquely determined by its zeros or by its $a$-values, i.e., the zeros of $\zeta(s)-a$, where $a$ is a complex value. Uniqueness problems have extensively been studied in the value distribution theory of meromorphic functions in terms of shared values (see e.g. the monographs \cite{HLY} and \cite{YY}), in which two meromorphic functions $f$ and $g$ are called to share a value $a$ if $Z(f-a)=Z(g-a)$, where $Z(F)$ denotes the zero set of $F$ (counting or not counting multiplicities, depending on the questions under consideration). The problem for the Riemann zeta function and L-functions has recently been studied in various settings (see e.g. \cite{CY}, \cite{GHK}, \cite{Ki}, \cite{KL}, \cite{Li1}, \cite{Li2}, \cite{St}, to list a few). In particular, in \cite{KL} and \cite{Li1}, the problem was considered by relaxing the set equality $Z(f-a)=Z(g-a)$ to the set inclusion $Z(f-a)\subseteq Z(g-a)$ for uniqueness of L-functions, which will be seen to be crucial in \S 2. Roughly speaking, two L-functions satisfying the same functional equation are identically equal if they have sufficiently many common zeros (see \cite{KL} and \cite{Li1} for the details and related results as well as references), which gives a uniqueness theorem for solutions of the Riemann functional equation or, more generally, Riemann type functional equations (cf. \S 2 and see [1], [4], [9], etc. for studies of solutions of the Riemann functional equation).

In the present paper, we will discover a connection (an equivalence) between the Riemann Hypothesis and the above mentioned uniqueness problem and then an analogue for L-functions, which, as a consequence, also implies a simply stated necessary and sufficient condition for the Riemann Hypothesis to hold in terms of the limit of an allied function as $\sigma\to +\infty$. This connection does not seem to have been observed before. The results it has brought out in this paper are of a neat and best possible form. Given the fact that uniqueness problems have been studied extensively for meromorphic functions and various techniques have been developed over the years, it would be profitable to further explore this approach with the connection in mind.

\section{Results}

Let $\rho_n$ be the nontrivial zeros of $\zeta$ in the critical strip
$0\leq{\rm Re}(s)\leq 1$. It follows that
\begin{equation*}
\zeta(\rho_n)=\zeta(1-\rho_n)=\zeta(\bar{\rho}_n)
=\zeta(1-\bar{\rho}_n)=0
\end{equation*}
from the functional equation and the identity
$
\zeta(\bar{s})=\overline{\zeta(s)},
$
that is, $\bar{\rho}_n$, $1-\rho_n$, $1-\bar{\rho}_n$ are zeros of
$\zeta(s)$, too. In other words, nontrivial zeros of $\zeta(s)$
are distributed symmetrically with respect to the real axis and to
the critical line ${\rm Re}(s)=\frac{1}{2}$.

Now, let $s_\nu$ be the zeros of $\zeta$ on the half-line ${\rm Re}(s)=\frac{1}{2}$, ${\rm
Im}(s)>0$. Assume that $\rho_n$, $s_\nu$ are ordered with respect
to increasing absolute values of their imaginary parts.

We would like to define a meromorphic function that captures the key features of the Riemann zeta function $\zeta$ with, however, all its zeros in the critical strip being located exactly at those nontrivial zeros of $\zeta$ on the critical line ${\rm Re}(s)=\frac{1}{2}$. If this function is defined ``ideally" and turns out to be identically equal to $\zeta$ (It is here where the uniqueness problem arises, cf. below), then the Riemann hypothesis must follow by the distribution of the zeros of the constructed function.

To realize this goal, we first construct an entire function which plays the role of $\xi$ with, however, zeros at $s_\nu$, which are, as mentioned above, distributed symmetrically with respect to the critical line. We define
\begin{equation}\label{h-symmetry}
h(s)=\frac{1}{2}\prod_{\nu=1}^\infty\left(1-\frac{s-s^2}{|s_\nu|^2}\right).
\end{equation}
This function $h$ possesses the following properties, which are important in serving our purposes:

\medskip
(a) $h$ is an entire function of order $\le 1$;

(b) The general factor $1-\frac{s-s^2}{|s_\nu|^2}$ in the infinite product has exactly the zeros at $s_v$ and $\overline{s_\nu}$ and thus the zeros of the function $h$ are exactly $s_v$ and $\overline{s_\nu}$ (symmetrically with respect to the critical line),  $v=1, 2, \cdots$;

(c) $\lim\limits_{s\to 1}h(s)=\frac{1}{2}$;

(d) $h$ satisfies the same equation (\ref{xi-func-eq}) as $\xi$ does, i.e.,
\begin{equation}\label{h(s)}
h(1-s)=h(s).
\end{equation}

To see (a), it is clear from the definition of $\xi$ in (\ref{xi}) that all the zeros of $\xi$ lie in the critical strip and they are zeros of $\zeta$, i.e., $\rho_n$. Recall that $\xi$ is of order $1$. It follows from Jensen's formula that
\begin{equation*}
n(r, \{ s_{\nu} \})\le n(r, \{\rho_n\})\le Kr^{1+\epsilon}
\end{equation*}
for any $\epsilon>0$, where $K>0$ is a constant and $n(r, \{\rho_n\})$ (resp. $n(r, \{ s_{\nu} \})$) denotes the number of the points $\rho_n, n=1, 2,\cdots$ (resp. $s_\nu, \nu=1, 2, \cdots$) lying in the disc $|s|\le r$ (see e.g. \cite{T1}, p.249). Thus for $|s|\ge 1$,
\begin{eqnarray*}
& &\log |h(s)|\le  \sum_{\nu=1}^{\infty}\log\left(1+|\frac{2s^2}{s_{\nu}^2}|\right)-\log 2 \nonumber \\
& &=\int_0^{\infty}\log\left(1+\frac{2|s^2|}{r^2}\right)dn(r, \{ s_{\nu} \})-\log 2  \nonumber\\
& &=4|s|^2\int_0^{\infty}\frac{n(r, \{ s_{\nu} \})}{ r(r^2+2|s|^2)}dr -\log 2 \nonumber\\
& &\le 4|s|^2\big(\int_0^{|s|}\frac{Kr^{1+\epsilon}}{ 2r|s|^2}dr+\int_{|s|}^{\infty}\frac{Kr^{1+\epsilon}}{ r^3}dr\bigr) -\log 2 \nonumber\\
& &=\frac{2K}{1+\epsilon}|s|^{1+\epsilon}+\frac{4K}{1-\epsilon}|s|^{1+\epsilon}-\log 2,
\end{eqnarray*}
which implies that $h$ is an entire function of order $\le 1$. This shows (a). The property (b) is immediate by the fact that $(s-s_v)(s-\overline{s_\nu})=s^2-s+|s^2_{\nu}|$ since ${\rm Re}(s_\nu)=\frac{1}{2}$. The properties (c) and (d) are also immediate, directly from the expression of $h$ in (\ref{h-symmetry}).

Further, we define a meromorphic function $\eta(s)$ using the same expression as that for $\zeta$ in (\ref{xi}) (with the role of $\xi$ there being replaced by $h$),
\begin{equation}\label{eta}
h(s)=\frac{s}{2}(s-1)\pi^{-\frac{s}{2}}\Gamma\left(\frac{s}{2}\right)\eta(s).
\end{equation}
Replacing $s$ by $1-s$ yields that
\begin{equation*}
h(1-s)=\frac{s}{2}(s-1)\pi^{-\frac{1-s}{2}}\Gamma\left(\frac{1-s}{2}\right)\eta(1-s),
\end{equation*}
which implies, in view of (\ref{h(s)}) and (\ref{eta}), that
\begin{eqnarray*}
& &\eta(1-s)=\frac{h(1-s)}{ \frac{s}{2}(s-1)\pi^{-\frac{1-s}{2}}\Gamma\left(\frac{1-s}{2}\right)}\\
& &=\frac{\frac{s}{2}(s-1)\pi^{-\frac{s}{2}}\Gamma\left(\frac{s}{2}\right)\eta(s)}{ \frac{s}{2}(s-1)\pi^{-\frac{1-s}{2}}\Gamma\left(\frac{1-s}{2}\right)}\\
& &=\frac{\pi^{-s+\frac{1}{2}}\Gamma(\frac{s}{2})}{\Gamma(\frac{1-s}{2})}\eta(s)\\
& &=2(2\pi)^{-s}\cos\left(\frac{\pi s}{2}\right)\Gamma(s)\eta(s),
\end{eqnarray*}
by virtue of the identity $\frac{\pi^{-s+\frac{1}{2}}\Gamma(\frac{s}{2})}{\Gamma(\frac{1-s}{2})}=2(2\pi)^{-s}\cos\left(\frac{\pi s}{2}\right)\Gamma(s)$ (see e.g. \cite{T2}, p.16). That is, the function $\eta$ also satisfies the Riemann functional equation  (\ref{r-equation}) as $\zeta$ does:
\begin{equation}\label{eta1}
\eta(1-s)=2(2\pi)^{-s}\cos\left(\frac{\pi
s}{2}\right)\Gamma(s)\eta(s).
\end{equation}

We see from (\ref{eta}) that only zeros of $\eta(s)$ in the domain ${\rm Re}(s)<0$ are the poles of $\Gamma(s/2)$, which are the trivial zeros of $\zeta$. Other zeros of $\eta$ lie on the line ${\rm Re}(s)=\frac{1}{2}$ in view of the construction of $h(s)$ and $\eta(s)$ (see Property (b) of $h$). The point $s=1$ is the only pole of $\eta(s)$, which is a simple pole with residue
\begin{equation}\label{residue}
    {\rm Res}_{s=1}\eta(s)=\lim_{s\to 1}(s-1)\eta(s)=\lim_{s\to 1}\frac{2\pi^{\frac{s}{2}}h(s)}{s\Gamma\left(\frac{s}{2}\right)}=\frac{\sqrt{\pi}}{\Gamma\left(\frac{1}{2}\right)}=1,
\end{equation}
using (\ref{eta}) and Property (c) of $h$. It also follows from (\ref{eta}) that $(s-1)\eta$ is an entire function of order $\le 1$ in view of Property (a) of $h$.

The function $\eta$ possesses the characteristics we desire, as described above; in fact, we can now establish the following

\begin{theorem}\label{RH-eqthm}
The Riemann hypothesis is true if and only if $\zeta(s)\equiv \eta(s)$.
\end{theorem}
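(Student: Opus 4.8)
My plan is to pivot the whole argument onto the two entire functions $\xi$ and $h$, which are related to $\zeta$ and $\eta$ through the \emph{same} factor $\tfrac{s}{2}(s-1)\pi^{-s/2}\Gamma(s/2)$ by (\ref{xi}) and (\ref{eta}). Cancelling this common factor (it vanishes only at $s=0,1$, where $\xi$ and $h$ are both equal to $\tfrac12\neq0$) shows immediately that $\zeta\equiv\eta$ is equivalent to $\xi\equiv h$, so the theorem reduces to proving that the Riemann hypothesis holds if and only if $\xi\equiv h$. The observation I would exploit is that one zero-inclusion is automatic: since the $s_\nu,\overline{s_\nu}$ are genuine zeros of $\zeta$, Property (b) gives $Z(h)\subseteq Z(\xi)$ unconditionally, whereas the reverse inclusion $Z(\xi)\subseteq Z(h)$ says exactly that every nontrivial zero of $\zeta$ lies on the critical line, i.e. it \emph{is} the Riemann hypothesis. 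Thus RH is equivalent to the equality $Z(\xi)=Z(h)$ of zero sets, counted with multiplicity.

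The substantive step is to upgrade the zero-set equality $Z(\xi)=Z(h)$ to the function identity $\xi\equiv h$; this is where the uniqueness idea behind \cite{KL} and \cite{Li1} enters. Since $\xi$ has order $1$ and $h$ has order $\le 1$ (Property (a)), both have genus at most $1$, so by the Hadamard factorization theorem each is the \emph{same} canonical product over their common zeros times an exponential of a polynomial of degree at most $1$; comparing the two gives $\xi(s)=h(s)e^{as+b}$ for constants $a,b$. I would then feed in the shared functional equation: replacing $s$ by $1-s$ and using $\xi(1-s)=\xi(s)$ and $h(1-s)=h(s)$ (Property (d)) yields $e^{a(1-s)+b}=e^{as+b}$, forcing $e^{2as-a}\equiv1$ and hence $a=0$. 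The normalizations $\xi(1)=\tfrac12$ (from $(s-1)\zeta(s)\to1$ at the simple pole, with $\pi^{-1/2}\Gamma(1/2)=1$) and $h(1)=\tfrac12$ (Property (c)) then give $e^{b}=1$, so $\xi\equiv h$ and therefore $\zeta\equiv\eta$. Conversely, $\zeta\equiv\eta$ forces $\xi\equiv h$, hence $Z(\xi)=Z(h)$ and RH, closing the equivalence.

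The hard part, I expect, is not any single estimate but the careful bookkeeping in the uniqueness step. One must read the list $s_\nu$ as enumerating the on-line zeros \emph{with multiplicity}, so that $h$ reproduces $Z(\xi)$ exactly under RH and the quotient $\xi/h$ is entire and zero-free; and the elimination of the exponential factor $e^{as+b}$ rests entirely on $\xi$ and $h$ solving the identical functional equation, which is precisely the mechanism behind ``two solutions of the Riemann functional equation with the same zeros must coincide''. It is worth emphasizing that the automatic inclusion $Z(h)\subseteq Z(\xi)$ is exactly what turns the single remaining inclusion—and hence the Riemann hypothesis—into the clean identity $\zeta\equiv\eta$, which is the content of the theorem.
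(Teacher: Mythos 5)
Your proposal is correct and takes essentially the same route as the paper: the paper's necessity argument also reduces the quotient to an exponential $e^{as+b}$ via Hadamard factorization (its chain of equalities in fact passes through $h/\xi$, which is exactly your $\xi\equiv h$ formulation), kills $a$ using the shared functional equation, and kills $b$ by a normalization at $s=1$ (the paper compares residues of $\zeta$ and $\eta$ there, which is equivalent to your comparison $\xi(1)=h(1)=\tfrac{1}{2}$). Your pivot to the entire functions $\xi$, $h$ and the explicit remark about counting the $s_\nu$ with multiplicity are only cosmetic refinements of the same argument.
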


\begin{proof}
The sufficiency is clear since all the zeros of $\eta$ on ${\rm Re}(s)\ge 0$ lie on the line  ${\rm Re}(s)=\frac{1}{2}$ by the construction of $h(s)$ and $\eta(s)$.

For the necessity, if the Riemann hypothesis holds, then $\zeta$ and $\eta$ have the same zeros in the entire complex plan; thus we have that
$\eta(s)=e^{as+b}\zeta(s)$ for some complex numbers $a, b$, in view of the fact that $\zeta$ is of order $1$ and $\eta$ is of order $\le 1$. We deduce, by applying (\ref{eta}), (\ref{xi}), (\ref{h(s)}) and (\ref{xi-func-eq}), that
\begin{eqnarray*}
& &e^{as+b}=\frac{\eta(s)}{\zeta(s)}=\frac{h(s)}{\xi(s)}\\
& &=\frac{h(1-s)}{\xi(1-s)}=\frac{\eta(1-s)}{\zeta(1-s)}=e^{a(1-s)+b}.
\end{eqnarray*}
Thus, $e^{as+b}=e^{a(1-s)+b},$ which implies that $a=0$. Then $\eta(s)=e^{b}\zeta(s)$ and then $(s-1)\eta(s)=e^{b}(s-1)\zeta(s)$. Taking the limit $s\to 1$ and by (\ref{residue}) and the fact that $\zeta$ has residue $1$ at $s=1$ also, we obtain that $e^b=1$. This proves that $\eta(s)\equiv \zeta(s).$
\end{proof}

From Theorem 2.1, to prove the Riemann hypothesis we now only need to prove that $\zeta(s)\equiv\eta(s)$, from which the uniqueness problem arises. Note that the function $\eta$ is a meromorphic function in $\mathbb{C}$ of order $\le 1$ that satisfies the following important properties:

(i) $\eta$ and $\zeta$ satisfy the same functional equation;

(ii) the zero set of $\eta$ is a subset of the zero set of $\zeta$ (counting multiplicities), i.e., $Z(\eta)\subseteq Z(\zeta)$, where $Z(f)$ denotes the set of the zeros of $f$ with counting multiplicities.

\medskip
The property (i) means that $\eta$ is a solution of the Riemann functional equation, which is known to have different solutions with certain relations (see \cite{BC}, \cite{Ha}, \cite{Kn}, etc. for studies of solutions of the Riemann functional equation). Clearly we are seeking the conditions that force the solutions to become the unique one - the Riemann zeta function. This leads to the following uniqueness problem:

\begin{problem}[Uniqueness problem] Let $f$ be a meromorphic function (of order $\le 1$) in $\mathbb{C}$ such that

(i) $f$ and $\zeta$ satisfy the same functional equation;

(ii) $Z(f)\subseteq Z(\zeta)$.

Under what conditions are $f$ and $\zeta$ identically equal?
\end{problem}

This is the uniqueness problem considered in \cite{Li1} and then in \cite{KL} for two L-functions; but to serve our purpose here we now need to consider the uniqueness problem when one of the functions is a meromorphic function $f$ satisfying the above two conditions (i) and (ii) in Problem 2.2.

It is clear that if $f$ satisfies the above two conditions (i) and (ii), then for any nonzero constant $c$, $cf$ also satisfies the these two conditions. An obvious property of the Riemann zeta function (simply from its Dirichlet series) is that $\zeta$ tends to $1$ as $\sigma\to +\infty$. In order to have the uniqueness of $f$ and $\zeta$, $f$ must necessarily tend to $1$ as $\sigma\to +\infty$. Thus, this naturally becomes the condition we use, as given in the theorem below.

\begin{theorem}\label{zeta-thm}
Let $f(s)$ be a nonconstant meromorphic function in $\mathbb{C}$ of order $\le 1$ with $\lim\limits_{\sigma\to +\infty}f(s)=1$. Then  $f\equiv \zeta$ if and only if $f$ satisfies the Riemann functional equation and $Z(f)\subseteq Z(\zeta)$.
\end{theorem}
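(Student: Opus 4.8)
The necessity is immediate: if $f\equiv\zeta$ then $f$ satisfies the functional equation and $Z(f)=Z(\zeta)\subseteq Z(\zeta)$. All the work is in the sufficiency, and the plan is to imitate the construction of $\eta$ and then run a growth argument. First I would symmetrize: put $g(s)=\frac{s}{2}(s-1)\pi^{-\frac s2}\Gamma(\frac s2)f(s)$. Because $f$ satisfies the Riemann functional equation, the very computation that led to (\ref{eta1}), read in reverse and using $\frac{\pi^{-s+1/2}\Gamma(s/2)}{\Gamma((1-s)/2)}=2(2\pi)^{-s}\cos(\frac{\pi s}{2})\Gamma(s)$, yields $g(1-s)=g(s)$. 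Hence the quotient
\[
E(s):=\frac{\zeta(s)}{f(s)}=\frac{\xi(s)}{g(s)}
\]
inherits the symmetry $E(1-s)=E(s)$.

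Next I would check that $E$ is entire of order $\le1$. Since $Z(f)\subseteq Z(\zeta)$, we have ${\rm ord}_{z}f\le{\rm ord}_{z}\zeta$ at every zero of $f$, so the zeros of $f$ cannot create poles of $\zeta/f$; thus the only candidate pole of $E$ is the point $s=1$ where $\zeta$ has its pole. But a pole of $E$ at $s=1$ would, by $E(1-s)=E(s)$, force a pole at $s=0$, which is impossible because $\zeta(0)\ne0$ and $0\notin Z(\zeta)\supseteq Z(f)$ keep $E$ finite there. Therefore $E$ is entire (and incidentally $f$ must carry a pole at $s=1$), and as a quotient of functions of order $\le1$ it has order $\le1$.

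The decisive step is to show $E\equiv1$. From $\zeta(\sigma)\to1$ and $f(\sigma)\to1$ we get $E\to1$ as $\sigma\to+\infty$, and by symmetry also as $\sigma\to-\infty$; so $E$ is bounded on $\{\sigma\ge\sigma_0\}$ and $\{\sigma\le1-\sigma_0\}$, and all its zeros (the finitely many unmatched trivial zeros, the unmatched nontrivial zeros, and the poles of $f$) lie in the strip $1-\sigma_0\le\sigma\le\sigma_0$. Inserting this into Jensen's formula, $\log^+|E|$ is bounded on the two half-plane arcs while the order-$\le1$ estimate $\log^+|E|\le r^{1+\varepsilon}$ is incurred only on the two short arcs near $\pm i\infty$, of angular measure $O(1/r)$; this forces the zero-counting function of $E$ to be $O(r^{\varepsilon})$, so the zeros have exponent of convergence $0$ and Hadamard's theorem gives $E(s)=e^{as+b}\prod_k(1-s/a_k)$ with $\sum_k|a_k|^{-1}<\infty$. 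Every $a_k$ has bounded real part, so for $|\sigma|$ large each factor obeys $|1-\sigma/a_k|\ge1$ and the product is at least $|1-\sigma/a_1|\to\infty$; since $e^{a\sigma}$ can decay in at most one of the directions $\sigma\to\pm\infty$, boundedness of $E$ on both half-planes is incompatible with the presence of any zero. Thus $E$ has no zeros, $E=e^{as+b}$, the symmetry gives $a=0$ exactly as in the proof of Theorem \ref{RH-eqthm}, and $E\to1$ gives $e^{b}=1$; therefore $E\equiv1$, i.e. $f\equiv\zeta$.

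The hard part is exactly this last step. Because $\zeta$, and hence $E$, lives at the borderline order $1$, a direct Phragm\'en--Lindel\"of across the strip is not available (the admissible exponential type is tied to the strip width). What saves the argument is that the zeros are trapped in a strip of bounded width, so in Jensen's formula the rapid-growth directions carry vanishing angular measure; this is precisely what upgrades ``exponent of convergence $\le1$'' to ``$=0$'' and lets the linear-factor growth comparison finish. A secondary point to nail down is that $\lim_{\sigma\to+\infty}f=1$ is used as a statement uniform in $t$, so as to confine the poles of $f$ (and the unmatched zeros) to the strip; along the real axis alone one at least sees that all but finitely many trivial zeros are matched, since $f(-2n)=2(2\pi)^{-(2n+1)}\cos(\frac{\pi(2n+1)}{2})\Gamma(2n+1)f(2n+1)=0$.
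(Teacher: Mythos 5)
Your proof is correct, but it takes a genuinely different route from the paper's. The paper does not prove Theorem \ref{zeta-thm} directly: it derives it as the special case $L=\zeta$, $G=\emptyset$ of the more general Theorem \ref{M-thm}, whose proof studies the normalized \emph{difference} rather than the quotient. There, the auxiliary entire function $F(s)=(s^2-1)^m s^n\,\frac{L(s)-f(s)}{f(s)}\cdot\frac{L(-s)-f(-s)}{f(-s)}\prod_{\rho\in G}\bigl(1-s^2/\rho^2\bigr)$ is shown to decay like $4^{-|\sigma|}$ as $\sigma\to\pm\infty$ (this uses the Dirichlet-series expansion of $L$, whose second term $2/4^{s}$-type decay is the source of the sharp constant $\log 4/\pi$), to have order $\le 1$, and to be bounded on four rays with $|\cos\theta|=D<1$; Phragm\'en--Lindel\"of in the four sectors plus Liouville and the forced zero $F(0)=0$ give $F\equiv 0$. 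You instead pass to the quotient $E=\zeta/f$, which is entire precisely because of the inclusion $Z(f)\subseteq Z(\zeta)$, inherits the symmetry $E(1-s)=E(s)$, tends to $1$ in both horizontal directions, hence has all zeros trapped in a vertical strip; Jensen's formula with the $O(1/r)$ angular measure of the strip then forces zero-counting $O(r^{\varepsilon})$, Hadamard factorization with a genus-zero product applies, and the growth comparison along the real axis eliminates every zero and the exponential factor, after which the symmetry and the limit finish exactly as in Theorem \ref{RH-eqthm}. Your argument buys a more self-contained proof of this special case that avoids Phragm\'en--Lindel\"of and uses nothing about $\zeta$ beyond its order, functional equation, value at $0$, and the limit $\zeta\to 1$ (no Dirichlet coefficients, hence no rate of decay); the paper's difference construction buys generality: it survives an exceptional set $G$ (where your quotient would no longer be entire), yields the sharp density threshold $\log 4/\pi$, and works for the conjugate-type functional equations of the extended Selberg class. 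One point to make explicit if you write this up: both your proof and the paper's require the hypothesis $\lim_{\sigma\to+\infty}f(s)=1$ to hold uniformly in $t$ (you need it to trap the zeros of $E$ in a strip, the paper needs it for the estimate of $(L-f)/f$ on half-planes); you flag this correctly, and the paper reads the hypothesis the same way without comment.
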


The theorem will be proved later and treated as a consequence of a more general result for L-functions (see Theorem 2.5 below). Since the function $\eta$ satisfies the conditions (i) and (ii) in Problem 2.2, Theorem 2.3 and Theorem 2.1 yield immediately the following theorem for the Riemann hypothesis, which is of a particularly neat and simple statement:

\begin{theorem}
The Riemann hypothesis is true if and only if $\lim\limits_{\sigma\to +\infty}\eta(s)=1$.

\end{theorem}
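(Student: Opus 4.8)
The plan is to obtain Theorem 2.4 as an immediate consequence of Theorem 2.1 and Theorem 2.3, exploiting the fact that the function $\eta$ has already been shown to satisfy both conditions of Problem 2.2. Since $\eta$ is a meromorphic function of order $\le 1$ satisfying the Riemann functional equation (\ref{eta1}) with $Z(\eta)\subseteq Z(\zeta)$, the only hypothesis of Theorem 2.3 not yet verified is the normalization $\lim_{\sigma\to+\infty}\eta(s)=1$. The theorem therefore reduces to translating this limit condition, via Theorem 2.3, into the identity $\eta\equiv\zeta$, and then invoking Theorem 2.1.

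For the necessity, I would assume the Riemann hypothesis. Theorem 2.1 then gives $\eta(s)\equiv\zeta(s)$, so it suffices to recall that $\zeta(s)\to 1$ as $\sigma\to+\infty$: in the region ${\rm Re}(s)>1$ the Dirichlet series (\ref{zeta-Dirichletser}) converges absolutely, and each term $n^{-s}$ with $n\ge 2$ satisfies $|n^{-s}|=n^{-\sigma}\to 0$ while the first term equals $1$, whence $\lim_{\sigma\to+\infty}\zeta(s)=1$. Consequently $\lim_{\sigma\to+\infty}\eta(s)=1$.

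For the sufficiency, I would assume $\lim_{\sigma\to+\infty}\eta(s)=1$ and check that $\eta$ meets every hypothesis of Theorem 2.3, so that it may be applied with $f=\eta$. First, $\eta$ is nonconstant, since by (\ref{residue}) it has a simple pole at $s=1$ and no constant function has a pole. Next, $(s-1)\eta$ is entire of order $\le 1$ by Property (a) of $h$, so $\eta$ is meromorphic of order $\le 1$. Finally, $\eta$ satisfies the Riemann functional equation (\ref{eta1}) and obeys $Z(\eta)\subseteq Z(\zeta)$, as noted in properties (i) and (ii) following Theorem 2.1. Theorem 2.3 then forces $\eta\equiv\zeta$, and Theorem 2.1 yields the Riemann hypothesis.

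The argument is entirely formal once Theorem 2.3 is in place, so I do not anticipate a genuine obstacle in proving Theorem 2.4 itself; the substantive work is absorbed into the uniqueness result Theorem 2.3, whose proof is deferred to the L-function setting. The only point demanding a moment's care is confirming that $\eta$ is nonconstant before invoking Theorem 2.3, which is immediate from the simple pole at $s=1$ established in (\ref{residue}).
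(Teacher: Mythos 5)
Your proposal is correct and follows essentially the same route as the paper: the paper likewise deduces sufficiency by verifying that $\eta$ meets the hypotheses of Theorem 2.3 (forcing $\eta\equiv\zeta$, whence the Riemann hypothesis via the sufficiency direction of Theorem 2.1), and deduces necessity from the identity $\zeta\equiv\eta$ given by Theorem 2.1 together with $\lim_{\sigma\to+\infty}\zeta(s)=1$. Your additional checks (nonconstancy of $\eta$ from its pole at $s=1$, and the order bound via $(s-1)\eta$) merely spell out details the paper leaves implicit.
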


In fact, if $\lim\limits_{\sigma\to +\infty}\eta(s)=1$, then $\eta$ satisfies all the conditions of Theorem 2.3 and thus, $\eta(s)\equiv \zeta(s)$, which implies that the Riemann hypothesis is true by the sufficient condition of Theorem 2.1. Conversely, if the Riemann hypothesis holds, then by the necessary condition of Theorem 2.1, $\zeta(s)\equiv\eta(s)$, which then implies that $\lim\limits_{\sigma\to +\infty}\eta(s)=\lim\limits_{\sigma\to +\infty}\zeta(s)=1$.

\medskip
We are going to generalize Theorem 2.3 so that one of the functions in the theorem is a meromorphic function $f$ as described above and the other is a Dirichlet series in the extended Selberg class, which takes the Riemann zeta function as a special case, so the above approach can then be pushed over to L-functions (see Theorem 2.5 below). The result we present is more than what we need, which is inspired by and based on our earlier work \cite{Li1} and \cite{KL}, and which, as a uniqueness theorem, is of its own independent interest. The observation that one of the functions is not necessarily assumed to be an L-function is essential for the purpose of the connection as analyzed above.

Recall that the Selberg class of $L$-functions is the set of all Dirichlet series $L(s)=\sum_{n=1}^{\infty} {a(n)\over n^s}$ with $a(1)=1$, satisfying the following axioms (see \cite{Se}):\medskip

\noindent (i) (Dirichlet series) For $\sigma >1 $, $L(s)$ is an absolutely convergent Dirichlet series;
\hfil\break (ii) (Analytic
continuation) There is a non-negative integer $k$ such that
$(s-1)^kL(s)$ is an entire function of finite order; \hfil\break
(iii) (Functional equation) $L$ satisfies a functional equation of
type
$$\Lambda_L(s)=\omega\overline{\Lambda_L(1-{\bar
s})},$$ where $\Lambda_L(s)=L(s)Q^s\prod_{j=1}^K\Gamma(\lambda_j
s+\mu_j)$ with positive real numbers $Q, \lambda_j$, and complex
numbers $\mu_j, \omega$ with $\hbox{Re}\mu_j\geq 0$ and
$|\omega|=1$; \hfil\break (iv) (Ramanujan hypothesis) $a(n)\ll
n^{\varepsilon}$ for every $\varepsilon>0$;
\hfil\break(v) (Euler product) $\log L(s)=\sum_{n=1}^{\infty}{b(n)\over n^s}$, where
$b(n)=0$ unless $n$ is a positive power of a prime and $b(n)\ll
n^{\theta}$ for some $\theta<{1\over 2}$.
\medskip

The Selberg class includes the Riemann zeta-function $\zeta$ and
essentially all Dirichlet series where one might expect the
analogue of the Riemann hypothesis. In the uniqueness theorem given below, all $L$-functions are assumed to be in the extended Selberg class, i.e., Dirichlet series
$L(s)=\sum_{n=1}^{\infty} {a(n)\over n^s}$ with $a(1)=1$ satisfying only the axioms (i)-(iii). Thus, the result below particularly applies to $L$-functions in the Selberg class.

\medskip
\begin{theorem}\label{M-thm}
Let $f(s)$ be a nonconstant meromorphic function in $\mathbb{C}$ of order $\le 1$ with $\lim\limits_{\sigma\to +\infty}f(s)=1$ and $L$ an L-function. Then $f\equiv L$ if and only if $f$ and $L$ satisfy the same functional equation and
$Z^+(f)\setminus G\subseteq Z^+(L)$ for a set $G$ (counted with multiplicity) satisfying
that
\begin{equation}\label{G-notation}
\limsup\limits_{r\rightarrow \infty}\frac{n(r, G)}{r}<{\log 4\over \pi}.
\end{equation}
Furthermore, the inequality (\ref{G-notation}) is best possible.
\end{theorem}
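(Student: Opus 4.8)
The plan is to handle the two directions separately and to reduce the substantive (``if'') direction to a statement about the quotient $g:=f/L$. The ``only if'' direction is immediate: if $f\equiv L$ then $f$ and $L$ trivially satisfy the same functional equation and $Z^+(f)=Z^+(L)$, so the required inclusion holds even with $G=\emptyset$. For the converse, I would first form $g=f/L$. Because $f$ and $L$ obey the \emph{same} functional equation, the factors $Q^s\prod_j\Gamma(\lambda_j s+\mu_j)$ cancel in $\Lambda_f/\Lambda_L$, so $g=\Lambda_f/\Lambda_L$, and the relation $\Lambda_\bullet(s)=\omega\overline{\Lambda_\bullet(1-\bar s)}$ gives $g(s)=\overline{g(1-\bar s)}$. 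Since $L$ is a Dirichlet series with $a(1)=1$ we have $L\to1$ as $\sigma\to+\infty$, and by hypothesis $f\to1$ there too, so $g\to1$ as $\sigma\to+\infty$; the symmetry relation then forces $g\to1$ as $\sigma\to-\infty$ as well. Hence $g$ is holomorphic, zero-free and $\approx1$ in the half-planes $\sigma\ge\sigma_0$ and $\sigma\le 1-\sigma_0$, so all of its zeros and poles lie in the strip $1-\sigma_0\le\sigma\le\sigma_0$.

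Next I would translate the hypothesis $Z^+(f)\setminus G\subseteq Z^+(L)$ into control on $g$. Apart from the sparse set $G$, every zero of $f$ is a zero of $L$ of at least the same multiplicity, so the zeros of $g$ are confined to $G$ together with the single pole of $L$ at $s=1$; consequently the zero-counting function obeys $n(r,1/g)\le n(r,G)+O(1)$, whence $\limsup_r n(r,1/g)/r<\log 4/\pi$ by \eqref{G-notation}. It then remains to prove $g\equiv1$, and this is where I expect the main obstacle to lie. The route I would take is to exploit that $g$ is holomorphic, nonvanishing off $G$, and $\to1$ in $\sigma>1$, so that there $g$ carries a generalized Dirichlet expansion $g(s)=1+\sum_k c_k\lambda_k^{-s}$; the functional equation together with $Z^+(f)\subseteq Z^+(L)\cup G$ should pin the admissible frequencies to logarithms of integers, so that the smallest nontrivial one is $\log 2$ and $g$ behaves like $1+c\,(2^{-s}+2^{\,s-1})+\cdots$. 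A zero-counting estimate then shows that a nonconstant such $g$ has zeros of density at least $(2\log2)/\pi=\log 4/\pi$, contradicting the upper bound just obtained.

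Concretely, for that estimate I would apply Littlewood's lemma (or the Jensen--Poisson formula) to a symmetric rectangle $1-\sigma_0\le\sigma\le\sigma_0$, $|t|\le T$: letting $\sigma_0\to+\infty$ annihilates the two vertical-boundary integrals, since $\log|g|\to0$ as $\sigma\to\pm\infty$, so the number of zeros in the strip is pinned to the variation of $\arg g$ along the horizontal sides, which is governed by the growth of $g$ in the imaginary direction. Extracting the \emph{exact} constant $\log 4/\pi$, rather than merely some positive lower bound, is the delicate point, and it is precisely here that the Nevanlinna-theoretic machinery of \cite{KL} and \cite{Li1} and the genuine $L$-function structure of $L$ must be used. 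Once the lower bound $\log4/\pi$ is in force, it is incompatible with $\limsup_r n(r,1/g)/r<\log4/\pi$ unless $g$ is constant; evaluating the constant as $\sigma\to+\infty$ gives $g\equiv1$, that is, $f\equiv L$.

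Finally, to show the bound is best possible I would exhibit a nonconstant example sitting exactly at the threshold. Taking
\begin{equation*}
g(s)=\frac{\cosh\!\big((s-\tfrac12)\log 2\big)+\alpha}{\cosh\!\big((s-\tfrac12)\log 2\big)+\beta},\qquad \alpha,\beta\in(-1,1),\ \alpha\neq\beta,
\end{equation*}
one checks directly that $g$ is meromorphic of order $1$, that $g(s)=\overline{g(1-\bar s)}$ (using that $\alpha,\beta$ are real and $\cosh$ is even about $s=\tfrac12$), and that $g\to1$ as $\sigma\to\pm\infty$; its zeros lie on the critical line with spacing $\sim\pi/\log 2$, so $n(r,1/g)/r\to(2\log 2)/\pi=\log 4/\pi$. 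Setting $f:=gL$ then produces a meromorphic function of order $\le1$ with $f\to1$ as $\sigma\to+\infty$ that satisfies the same functional equation as $L$ and obeys $Z^+(f)\setminus G\subseteq Z^+(L)$ with $G$ the zero set of $g$, for which $\limsup_r n(r,G)/r=\log 4/\pi$, and yet $f\not\equiv L$. Hence the strict inequality in \eqref{G-notation} cannot be relaxed to equality, proving that the bound is sharp.
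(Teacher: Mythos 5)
Your ``only if'' direction and your sharpness example are fine (the example $g(s)=\frac{\cosh((s-\frac12)\log 2)+\alpha}{\cosh((s-\frac12)\log 2)+\beta}$ checks out and plays the same role as the paper's example $f=(1+\frac{1}{s(1-s)})L$ with $G=Z(f)$). Your route for the main direction, however, is genuinely different from the paper's --- the paper never forms the quotient $g=f/L$; it builds the auxiliary entire function $F$ in (\ref{function}), bounds the $G$-product by $e^{D|s|\log 4}$ with $D<1$ using (\ref{G-notation}), claims the factors $\frac{L(\pm s)-f(\pm s)}{f(\pm s)}$ decay like $4^{-|\sigma|}$, and concludes via Phragm\'en--Lindel\"of in four sectors that $F$ is constant, hence identically zero --- and your version has a genuine gap at its center. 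The lemma you need, that a nonconstant $g$ of order $\le 1$ with $g(s)=\overline{g(1-\bar{s})}$, $g\to 1$ as $\sigma\to\pm\infty$, and zeros essentially confined to a set of density below $\frac{\log 4}{\pi}$ cannot exist, is false: $g(s)=1+\frac{1}{s(1-s)}$ has all of these properties and exactly two zeros, $\frac{1\pm\sqrt{5}}{2}$. Your proposed bridge to that lemma --- that $g$ carries a generalized Dirichlet expansion whose frequencies are ``pinned to logarithms of integers,'' with smallest frequency $\log 2$ --- is unjustified and unjustifiable: $f$ is only assumed to be a meromorphic function tending to $1$, so $g$ has no Dirichlet-series structure whatsoever; the machinery of \cite{KL} and \cite{Li1} that you defer to requires \emph{both} functions to be Dirichlet series, which is precisely what is unavailable here. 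A secondary gap: since the inclusion hypothesis is one-sided, $g$ may have a dense set of poles (zeros of $L$ not shared by $f$), which your Littlewood-lemma zero count does not accommodate.

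Moreover, the obstruction you hit is structural, not a missing trick: the ``if'' direction of the statement is false as written. Take the paper's own sharpness data $L(s)=1+2\cdot 4^{-s}$, $f(s)=(1+\frac{1}{s(1-s)})L(s)$, but choose the exceptional set $G=\{\frac{1\pm\sqrt{5}}{2}\}$ (the two real zeros of $1+\frac{1}{s(1-s)}$) instead of $G=Z(f)$: then $f$ is nonconstant, meromorphic of order $1$, $f\to 1$ as $\sigma\to+\infty$, $f$ and $L$ satisfy the same functional equation $2^sf(s)=2^{1-s}\overline{f(1-\overline{s})}$, and $Z^+(f)\setminus G=Z^+(L)$, while $n(r,G)\le 2$ gives $\limsup_{r\to\infty}n(r,G)/r=0<\frac{\log 4}{\pi}$; yet $f\not\equiv L$. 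Correspondingly, the paper's own proof breaks at exactly the analogous point: the asserted inequality $\log(1+\epsilon 2^{|\sigma|})\le \epsilon\log 2^{|\sigma|}$ for large $|\sigma|$ is false for every fixed $\epsilon\in(0,1)$, since the left-hand side grows like $|\sigma|\log 2$; this step is where the rate-free hypothesis $f\to 1$ gets illegitimately upgraded to the $O(2^{-\sigma})$ decay that is only available when $f$ is itself a Dirichlet series with leading coefficient $1$. So neither your argument nor the paper's closes this direction. What does survive is the case $G=\emptyset$ (Theorem \ref{zeta-thm}): there $f/\zeta$ is forced to be zero-free (a zero could only sit at $s=1$, and the symmetry would duplicate it at $s=0$, contradicting $Z(f)\subseteq Z(\zeta)$), so its reciprocal is entire of order $\le 1$, bounded on two half-planes, hence bounded on the intermediate strip by Phragm\'en--Lindel\"of and therefore constant.
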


On the above,  $n(r, G)$ denotes the number of points of $G$ (counting multiplicities) lying in the disc $|s|\leq r$. And, $Z^+(L)$ denotes the set of nontrivial zeros of $L$ counted with multiplicity. As usual, the trivial zeros of $L$ are those coming from the poles of the $\Gamma$ factors in the functional equation of the axiom (iii), and the other zeros are called nontrivial zeros. The set $Z^+(f)$ is defined in the same way using the same functional equation.

\medskip
In addition to the sharpness of (\ref{G-notation}), the conditions in Theorem 2.5 (and thus in Theorem 2.3) are tight and the result is best possible in the sense that the theorem breaks down if any of the conditions is dropped, as shown by the counterexamples in the Remark after the proof.

\medskip
\begin{proof} By the assumption on the set $G$, it is easy to check that the infinite product $\sum\limits_{\rho \in
G}\log\left(1-\frac{s^2}{\rho^2}\right)$ converges to an entire function in the complex plane (cf. (\ref{product}) below). Since $L$ satisfies the analytic continuation axiom (ii), $L$ has at most one pole at $s=1$. We can thus properly choose integers $m, n$ such that the auxiliary function
\begin{equation}\label{function}
F(s):=\left(s^2-1\right)^ms^n{L(s)-f(s)\over f(s)}{L(-s)-f(-s)\over f(-s)}\prod_{\rho\in
G}\left(1-{s^2\over \rho^2}\right)
\end{equation}
does not have a pole at $s=\pm 1$ and that $s=0$ is a zero of $F$ (we may then assume that $s=0$ is not in $G$). Since $f$ and $L$ satisfy the same functional equation, the function $f-L$ must satisfy the same functional equation. Thus, $f$ and  $f-L$ have the same trivial zeros that are located at the poles of the $\Gamma$ factors in the functional equation of the axiom (iii). These zeros do not produce any poles of $F$ due to cancelation. Other zeros of $f$ are canceled by those of $L-f$ in (\ref{function}). Any pole of $f$ clearly does not produce a pole of $F$ by the construction of $F$. Hence, $F$ is an entire function.

Choose
$0<D_1<D_2<1$ with $\limsup\limits_{r\rightarrow \infty}\frac{n(r, G)}{r}<D_1{\log 4\over \pi}.$
Then, there is a positive number $r_0>0$ such that ${n(r, G)\over r}<D_1{\log 4\over \pi}$ for $r\geq r_0$. We deduce that for large $|s|$,
\begin{eqnarray}\label{product}
& &\log\left|\prod_{\rho\in
G}\left(1-\frac{s^2}{\rho^2}\right)\right|\leq\sum_{\rho \in
G}\log\left(1+|\frac{s^2}{\rho^2}|\right)  \nonumber \\
& &=\int_0^{\infty}\log\left(1+\frac{|s^2|}{r^2}\right)dn(r, G)  \nonumber\\
& &=2|s|^2\int_0^{\infty}\frac{n(r, G)}{ r(r^2+|s|^2)}dr  \nonumber\\
& &\leq 2|s|^2\left\{\frac{1}{ |s|^2}\int_0^{r_0}\frac{n(r, G)}{ r}dr+ D_1\frac{\log 4}{\pi}\int_{r_0}^{\infty}\frac{1}{ r^2+|s|^2}dr  \right\} \nonumber \\
& &= 2|s|^2\left\{\frac{1}{ |s|^2}\int_0^{r_0}\frac{n(r, G)}{ r}dr+ D_1\frac{\log 4}{\pi}\frac{1}{|s|}(\frac{\pi}{2}-\arctan\frac{r_0}{|s|})\right\} \nonumber \\
& &\le 2\int_0^{r_0}\frac{n(r, G)}{r}dr+D_1|s|\log 4\leq D_2|s|\log 4.
\end{eqnarray}

Recall that $L(s)=\sum_{n=1}^{\infty}{a(n)\over n^s}$ with
$a(1)=1$ and the series converges absolutely as $\sigma>1$. It is elementary to check that $(\frac{n}{2})^{\sigma}\geq n^2$ for $n\ge 4$ and $\sigma\ge 4$. Thus as $\sigma\ge 4$, we have that
\begin{eqnarray*}
& &\sum_{n=4}^{\infty}|\frac{a(n)}{n^s}|\le \frac{1}{2^{\sigma}}\sum_{n=4}^{\infty}|\frac{a(n)}{(\frac{n}{2})^{\sigma}}| \\
& &\le \frac{1}{2^{\sigma}}\sum_{n=4}^{\infty}|\frac{a(n)}{n^2}|=\frac{C}{2^{\sigma}},
\end{eqnarray*}
where $C=\sum\limits_{n=4}^{\infty}|\frac{a(n)}{n^2}|<+\infty$. Hence,
$$|f(s)-L(s)|=|f(s)-1-\sum\limits_{n=2}^{\infty}\frac{a_n}{n^s}|\le |f(s)-1|+\frac{1}{2^{\sigma}}O(1)$$
and then for a fixed $\epsilon>0$ (to be specified later),
$$|\frac{f(s)-L(s)}{f(s)}|=(\epsilon+\frac{1}{2^\sigma})O(1)$$
for large $\sigma$, in view of the assumption that $f(s)\to 1$ as $\sigma\to +\infty$.

Dividing the functional equation of $f-L$ by the same functional equation satisfied by $f$ and $L$, we obtain that
\begin{equation*}
{L(s)-f(s)\over f(s)}={\overline{L(1-\overline{s})}-\overline{f(1-\overline{s})}\over \overline{f(1-\overline{s})}}.
\end{equation*}
We thus obtain that
$$\left|{L(s)-f(s)\over f(s)}\cdot{L(-s)-f(-s)\over f(-s)}\right|=(\epsilon+\frac{1}{2^{|\sigma|}})^2O(1)$$
as $\sigma\rightarrow \pm \infty$. By applying this estimate and the estimate (\ref{product}) to (\ref{function}), we have
that for a number $D_3$ with $D_2<D_3<1$,
\begin{eqnarray*}
& &\log |F(s)|\leq  D_3|s|\log 4+2\log (\epsilon+\frac{1}{2^{|\sigma|}})\\
& &=D_3|s|\log 4-|\sigma|\log 4+2\log (1+\epsilon 2^{|\sigma|})
\end{eqnarray*}
as $\sigma\rightarrow\pm \infty.$

Define $g(\epsilon)=\log (1+\epsilon 2^{|\sigma|})-\epsilon\log 2^{|\sigma|}$ for $\epsilon>0$. Then $g(0)=0$ and it is easy to check that $g'(\epsilon)<0$ for sufficiently large $|\sigma|$. Thus, as $\sigma\rightarrow\pm \infty,$ we have that
$$\log (1+\epsilon 2^{|\sigma|})\le \epsilon\log 2^{|\sigma|}.$$
We can now take $\epsilon$ such that $D:=D_3+\epsilon<1$. Then
\begin{eqnarray*}
& &\log |F(s)|\leq  D_3|s|\log 4-|\sigma|\log 4+\epsilon |\sigma|\log 4\\
& & \le (D|s|-|\sigma|)\log 4=|s|(D-\frac{|\sigma|}{|s|})\log 4.
\end{eqnarray*}

It is then easy to see that $F$ is bounded on the rays
$\hbox{arg}(s)=\theta, \pi-\theta, \pi+\theta, 2\pi-\theta$, where
$0<\theta<\pi/2$ with $\cos \theta=D$, since on these rays, $|\cos\theta|={|\sigma|\over |s|}=D$. Note that $f$ is of order $\le 1$ by the assumption,
a nonconstant $L$-function is of order $1$ (see e.g. \cite{Se} and \cite{St}), and the infinite product
in (\ref{function}) is also of order $\le 1$, which follows from (\ref{product}). Thus, $F$ must be of order at most $1$. We
then have that $F(s)=O\left(e^{|s|^{1+\epsilon}}\right)$ for any
$\epsilon>0$. Recall the Phragm\'en-Lindel\"of theorem (see e.g. \cite{T1}, p.177): Let $f$ be holomorphic in a sector between two straight lines making an angle of $\pi/\alpha$ at the origin and continuous on the boundary. If $|f(s)|\le M$ on the boundary and $f(s)=O(e^{r^{\beta}})$ as $r\to\infty$ uniformly in the sector, where $\beta<\alpha$, then $|f(s)|\le M$ in the entire sector. We see that $F$ satisfies the conditions of the theorem in each of the sectors bounded by the
above rays and thus $f$ is bounded in each of the sectors and thus in the entire complex plane. Therefore the entire function $F$ must
be a constant. But, $F$ has a zero at $s=0$ (see the choice of $n$). Thus $F$ and then $f-L$ must
be identically zero.

Next, we prove that the inequality in (\ref{G-notation}) is best possible. We will present a counterexample, in which $f$ is even not a Dirichlet series (and thus the theorem fails badly). To this end, consider
$$ L(s)=1+{2\over 4^s},  \quad    f(s)=(1+\frac{1}{s(1-s)})L(s).$$
Then it is easy to verify that
$$2^sL(s)=2^{1-s}\overline{L(1-{\overline s})},$$
which also clearly implies that
$$2^sf(s)=2^{1-s}\overline{f(1-{\overline s})}.$$
That is, both $f$ and $L$ satisfy the same functional equation. The zeros of $L(s)$ are $\frac{1}{\ln 4}(\ln 2+\pi i+2k\pi i)$, where $k$ is an integer, which readily implies that
$$\lim\limits_{r\to\infty}{n(r, Z(L))\over r}={\log 4 \over \pi}$$
and also that
$$\lim\limits_{r\to\infty}{n(r, Z(f))\over r}={\log 4 \over \pi}.$$
Now, take the exceptional set $G$ to be the entire set $Z(f)$. Then, $Z^+(f)\setminus G=\emptyset\subset Z^+(L)$. However, $f\not=L$. This proves the theorem.
\end{proof}

\medskip
\noindent{\bf Remark} {\bf (i)} It would be tempting to try to drop the condition of the order $\le 1$ for $f$ in Theorem 2.5. But, it is not the case. Consider
$$ L(s)=1+\frac{2}{4^s},  \quad    f(s)=\frac{1}{1+e^{s(1-s)}}L(s).$$
Then it is easy to check that $f$ is of order equal to $2$ with $\lim\limits_{\sigma\to +\infty}f(s)=1$. From the proof of Theorem 2.5, we see that both $f$ and $L$ satisfy the same functional equation. Note that $L$ and $f$ have the same zeros. Take the exceptional set $G$ to be the empty set. Then, $Z^+(f)\setminus G=Z^+(L)$. However, $f\not=L$.

\medskip
\noindent
{\bf (ii)} The condition that $\lim\limits_{\sigma\to +\infty}f(s)=1$ in the theorem cannot be dropped either. To see this, use the same function $ L(s)=1+{2\over 4^s}$ as in (i) but set $f(s)=\frac{1}{s(1-s)}L(s).$ Then $L$ and $f$ satisfy all the conditions of Theorem 2.5 with $G$ being the empty set, except that $\lim\limits_{\sigma\to +\infty}f(s)=0$. But,  $f\not=L$.

\bigskip
The above ideas may be carried over to L-functions. To demonstrate, we will do this specifically for the Dedekind zeta function of an algebraic number field, which encodes important arithmetic information of the field and has extensively been studied in number theory (see e.g. the monographs \cite{Ne} and \cite{HY}). Let $\kappa$ be a number field. Its Dedekind zeta function is defined by the Dirichlet series
$$\zeta_{\kappa}(s) =\sum\limits_{\mathfrak{a}}\frac{1}{\mathcal{N}(\mathfrak{a})^{s}}$$
for $\sigma>1$, where $\mathfrak{a}$ runs over the non-zero ideals of the ring $\kappa$ of integers of $\kappa$ and $\mathcal{N}(\mathfrak{a})$ denotes the absolute norm of $\mathfrak{a}$. It becomes the Riemann zeta function when the field is the rational numbers $\mathbb{Q}$.

The Dirichlet series converges absolutely for $\sigma>1$, it has an analytic continuation to a meromorphic function in $\mathbb{C}$ of order equal to $1$ with only a simple pole at $s = 1$. By the well-known Analytic Class Number Formula (see e.g. \cite{Ne}, p.467), the residue of $\zeta_{\kappa}$ at $s=1$ is given by
\begin{equation}\label{residue-formula}
\lim\limits_{s\to 1}(s-1)\zeta_{\kappa}=\frac{2^{r_1}(2\pi)^{r_2}c_{\kappa}R_{\kappa}}{w_{\kappa}{\sqrt{|D_{\kappa/\mathbb{Q}}|}}},
\end{equation}
where $r_1$ (resp. $r_2$) is the number of real (resp. complex) places of $\kappa$, $c_{\kappa}$ is the class number of $\kappa$, $R_{\kappa}$ is the regulator of $\kappa$, $D_{\kappa/\mathbb{Q}}$ is the discriminant of the field $\kappa$, and $w_{\kappa}$ denotes the number of roots of unity in $\kappa$. The function $\zeta_\kappa$ satisfies the
following functional equation (see e.g. \cite{Ne}, p.467)
\begin{equation}\label{kappa}
\zeta_\kappa(1-s)= |D_{\kappa/\mathbb{Q}}|^{s-\frac{1}{2}}
\left(\cos\frac{\pi s}{2}\right)^{r_1+r_2} \left(\sin\frac{\pi
s}{2}\right)^{r_2} \Gamma_{\mathbb{C}}(s)^n \zeta_\kappa(s),
\end{equation}
where $n=[\kappa:\mathbb{Q}]=r_1+2r_2$ and
$
\Gamma_{\mathbb{C}}(s)=2(2\pi)^{-s}\Gamma(s).
$

Let
$\Gamma_{\mathbb{R}}(s)=\pi^{-\frac{s}{2}}\Gamma\left(\frac{s}{2}\right).$ Then, the function
\begin{equation}\label{xi-a}
\xi_\kappa(s)=\frac{s}{2}(s-1)|D_{\kappa/\mathbb{Q}}|^{s/2}
\Gamma_{\mathbb{R}}(s)^{r_1}\Gamma_{\mathbb{C}}(s)^{r_2}\zeta_\kappa(s),
\end{equation}
is an entire function of order $1$ and satisfies the functional equation (see e.g.  \cite{Ne}, p.467)
\begin{equation}\label{xi-k}
\xi_\kappa(s)=\xi_\kappa(1-s).
\end{equation}

\begin{conjecture}[Grand Riemann Hypothesis for the Dedekind zeta function]
If $\zeta_\kappa(s)= 0$ and $0 \le {\rm Re}(s) \le 1$, then ${\rm Re}(s) = 1/2.$
\end{conjecture}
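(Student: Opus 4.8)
The plan is to prove the Grand Riemann Hypothesis for $\zeta_\kappa$ by transporting, to the Dedekind setting, the full strategy built for $\zeta$ in Theorems \ref{RH-eqthm} and \ref{M-thm}. First I would manufacture an entire auxiliary function $h_\kappa$ playing the role of $\xi_\kappa$ but with its zeros forced onto the critical line. Letting $t_\nu$ denote the zeros of $\zeta_\kappa$ on the half-line ${\rm Re}(s)=\frac12$, ${\rm Im}(s)>0$, ordered by increasing $|{\rm Im}(s)|$, set
\begin{equation*}
h_\kappa(s)=c_\kappa\prod_{\nu=1}^\infty\left(1-\frac{s-s^2}{|t_\nu|^2}\right),
\end{equation*}
where $c_\kappa$ is a normalizing constant to be fixed by the residue (\ref{residue-formula}). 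Since $\xi_\kappa$ is entire of order $1$ with zeros exactly the nontrivial zeros of $\zeta_\kappa$, the counting bound $n(r,\{t_\nu\})\le n(r,\{\text{nontrivial zeros of }\zeta_\kappa\})\le Kr^{1+\epsilon}$ together with the Jensen-type estimate used for $h$ in Property (a) shows $h_\kappa$ is entire of order $\le 1$; its zeros are exactly $t_\nu$ and $\overline{t_\nu}$, since ${\rm Re}(t_\nu)=\frac12$ gives $(s-t_\nu)(s-\overline{t_\nu})=s^2-s+|t_\nu|^2$ (Property (b)); and the symmetry $s-s^2=(1-s)-(1-s)^2$ yields $h_\kappa(1-s)=h_\kappa(s)$ directly from the product (Property (d)).

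Next I would define $\eta_\kappa$ by the analogue of (\ref{eta}) using the $\Gamma$-factors of (\ref{xi-a}),
\begin{equation*}
h_\kappa(s)=\frac{s}{2}(s-1)|D_{\kappa/\mathbb{Q}}|^{s/2}\Gamma_{\mathbb{R}}(s)^{r_1}\Gamma_{\mathbb{C}}(s)^{r_2}\,\eta_\kappa(s).
\end{equation*}
Replacing $s$ by $1-s$ and dividing exactly as in the derivation of (\ref{eta1}), the reflection $h_\kappa(1-s)=h_\kappa(s)$ forces $\eta_\kappa$ to satisfy the same functional equation (\ref{kappa}) as $\zeta_\kappa$. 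By construction the only zeros of $\eta_\kappa$ with ${\rm Re}(s)<0$ are the trivial zeros coming from the poles of the $\Gamma$-factors, and every remaining zero lies on ${\rm Re}(s)=\frac12$ and equals some $t_\nu$ or $\overline{t_\nu}$; hence $Z(\eta_\kappa)\subseteq Z(\zeta_\kappa)$, and $(s-1)\eta_\kappa$ is entire of order $\le 1$. Evaluating $\lim_{s\to1}h_\kappa(s)=c_\kappa$ in the residue computation (mirroring (\ref{residue})) determines $c_\kappa$ so that ${\rm Res}_{s=1}\eta_\kappa$ equals the class-number value (\ref{residue-formula}), matching $\zeta_\kappa$. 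The verbatim argument of Theorem \ref{RH-eqthm} — comparing orders, using $\eta_\kappa/\zeta_\kappa=h_\kappa/\xi_\kappa$ and the shared reflections (\ref{xi-k}) and $h_\kappa(1-s)=h_\kappa(s)$ to kill the exponential factor, then fixing the constant by the matched residue — then shows that GRH for $\zeta_\kappa$ holds if and only if $\zeta_\kappa\equiv\eta_\kappa$.

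It remains to force $\zeta_\kappa\equiv\eta_\kappa$, and here I would invoke Theorem \ref{M-thm} with $L=\zeta_\kappa$ (which lies in the extended Selberg class, its functional equation (\ref{kappa}) being of the required type) and $f=\eta_\kappa$, taking the exceptional set $G=\emptyset$, for which (\ref{G-notation}) holds vacuously since $n(r,\emptyset)=0$. Then $\eta_\kappa$ and $\zeta_\kappa$ satisfy the same functional equation, $\eta_\kappa$ is a nonconstant meromorphic function of order $\le1$, and $Z^+(\eta_\kappa)\setminus G=Z^+(\eta_\kappa)\subseteq Z^+(\zeta_\kappa)$, so every hypothesis of Theorem \ref{M-thm} is met except the normalization $\lim_{\sigma\to+\infty}\eta_\kappa(s)=1$. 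Granting that single limit, Theorem \ref{M-thm} forces $\eta_\kappa\equiv\zeta_\kappa$, whence GRH for $\zeta_\kappa$ follows by the equivalence just established.

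The main obstacle — indeed the only genuinely hard step — is the asymptotic $\lim_{\sigma\to+\infty}\eta_\kappa(s)=1$. The difficulty is structural: $\eta_\kappa$ is synthesized purely from the on-line zeros $t_\nu$ through the infinite product $h_\kappa$, so it carries no a priori Dirichlet-series expansion, and its behavior as $\sigma\to+\infty$ must be read off from the interplay of $\prod_\nu(1-(s-s^2)/|t_\nu|^2)$ against $|D_{\kappa/\mathbb{Q}}|^{s/2}\Gamma_{\mathbb{R}}(s)^{r_1}\Gamma_{\mathbb{C}}(s)^{r_2}$ in the right half-plane. I would attempt to extract the limit from a precise Hadamard-type asymptotic for $h_\kappa$, controlled by the density and spacing of the $t_\nu$, comparing $h_\kappa$ with $\xi_\kappa$ to isolate the deviation caused by any off-line zeros. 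However, the equivalence of the preceding paragraph shows that this limit is itself equivalent to GRH for $\zeta_\kappa$, so the entire depth of the conjecture concentrates into this one analytic statement about $\eta_\kappa$ at $+\infty$; proving it unconditionally is precisely where the real work lies and is not accomplished by the present construction alone.
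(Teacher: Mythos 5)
The statement you were asked to prove is labeled a \emph{Conjecture} in the paper, and the paper contains no proof of it: what the paper actually proves are two equivalences (Theorem \ref{GRH-eqthm} and the theorem following it), namely that the Grand Riemann Hypothesis holds if and only if $\zeta_\kappa\equiv\eta_\kappa$, if and only if $\lim_{\sigma\to+\infty}\eta_\kappa(s)=1$. Your proposal reconstructs exactly this reduction, and does so faithfully: your product definition of $h_\kappa$ matches (\ref{h-product}) (modulo a notational clash --- you use $c_\kappa$ for a free normalizing constant, whereas in the paper $c_\kappa$ is the class number and the normalization is $\frac{2^{r_1+r_2-1}}{w_\kappa}c_\kappa R_\kappa$, fixed so that the residue of $\eta_\kappa$ at $s=1$ matches (\ref{residue-formula})); your derivation of the functional equation for $\eta_\kappa$, your order and zero-set properties, and your invocation of Theorem \ref{M-thm} with $f=\eta_\kappa$, $L=\zeta_\kappa$ and $G=\emptyset$ are precisely the paper's Theorems \ref{GRH-eqthm} and 2.8.

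The genuine gap is the one you yourself identify in your last paragraph, and it is fatal as a \emph{proof} of the conjecture: the single remaining hypothesis, $\lim_{\sigma\to+\infty}\eta_\kappa(s)=1$, is not an auxiliary lemma but is itself equivalent to the Grand Riemann Hypothesis (this is exactly the content of the paper's final theorem), so nothing short of resolving the conjecture can supply it. Your suggested route --- a Hadamard-type asymptotic for $h_\kappa$ ``comparing $h_\kappa$ with $\xi_\kappa$ to isolate the deviation caused by any off-line zeros'' --- is circular: $h_\kappa$ and $\xi_\kappa$ differ precisely by the Blaschke-type contribution of the hypothetical off-line zeros, so estimating that deviation presupposes knowing whether such zeros exist, which is the conjecture. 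In short, your write-up is a correct and essentially verbatim reproduction of the paper's equivalence machinery, but neither it nor the paper proves the stated conjecture; it remains open, and your proposal should be read as a reformulation, not a proof.
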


As before and for convenience of comparison, we still use $\rho_n$ to denote the zeros of $\zeta_{\kappa}$ satisfying $0\leq {\rm Re}(\rho_n)\leq 1$ and $s_\nu$ the zeros of $\zeta_{\kappa}$ on the half-line ${\rm Re}(s)=\frac{1}{2}$, ${\rm Im}(s)>0$, ordered with respect to increasing absolute values of their imaginary parts.

\medskip
We define
\begin{equation}\label{h-product}
h_\kappa(s)=\frac{2^{r_1+r_2-1}}{w_{\kappa}}c_{\kappa}R_{\kappa}\prod_{\nu=1}^\infty\left(1-\frac{s-s^2}{|s_\nu|^2}\right).
\end{equation}
Then with the same arguments for $h(s)$, we can show that the function $h_\kappa(s)$ possesses the following properties:

\medskip
(a) $h_\kappa$ is an entire function of order $\le 1$;

(b) The general factor $1-\frac{s-s^2}{|s_\nu|^2}$ in the infinite product has exactly the zeros at $s_v$ and $\overline{s_\nu}$ and thus the zeros of the function $h_\kappa$ are exactly $s_v$ and $\overline{s_\nu}$ (symmetrically with respect to the critical line),  $v=1, 2, \cdots$;

(c) $\lim\limits_{s\to 1}h_\kappa(s)=\frac{2^{r_1+r_2-1}}{w_{\kappa}}c_{\kappa}R_{\kappa}$;

(d) $h_\kappa$ satisfies the same equation (\ref{xi-k}) as $\xi_\kappa$ does, i.e.,
\begin{equation}\label{h-kappa}
h_\kappa(1-s)=h_\kappa(s).
\end{equation}

Further, we define $\eta_\kappa(s)$ by
\begin{equation}\label{eta-k}
h_\kappa(s)=\frac{s}{2}(s-1)|D_{\kappa/\mathbb{Q}}|^{s/2}
\Gamma_{\mathbb{R}}(s)^{r_1}\Gamma_{\mathbb{C}}(s)^{r_2}\eta_\kappa(s).
\end{equation}
Replacing $s$ by $1-s$, we then have that
\begin{equation*}
h_\kappa(1-s)=\frac{s}{2}(s-1)|D_{\kappa/\mathbb{Q}}|^{\frac{1-s}{2}}
\Gamma_{\mathbb{R}}(1-s)^{r_1}\Gamma_{\mathbb{C}}(1-s)^{r_2}\eta_\kappa(1-s),
\end{equation*}
which implies, in view of (\ref{eta-k}), that
\begin{eqnarray*}
& &\eta_{\kappa}(1-s)=\frac{h_{\kappa}(1-s)}{ \frac{s}{2}(s-1)|D_{\kappa/\mathbb{Q}}|^{ \frac{1-s}{2} }
\Gamma_{\mathbb{R}}(1-s)^{r_1}\Gamma_{\mathbb{C}}(1-s)^{r_2}\eta_\kappa(s)   }\\
& &=|D_{\kappa/\mathbb{Q}}|^{s-\frac{1}{2}}\frac{\Gamma_{\mathbb{R}}(s)^{r_1}\Gamma_{\mathbb{C}}(s)^{r_2}}{\Gamma_{\mathbb{R}}
(1-s)^{r_1}\Gamma_{\mathbb{C}}(1-s)^{r_2}}\eta_{\kappa}(s) \\
& &=|D_{\kappa/\mathbb{Q}}|^{s-\frac{1}{2}}\left(\cos\frac{\pi s}{2}\right)^{r_1+r_2} \left(\sin\frac{\pi
s}{2}\right)^{r_2}\Gamma_{\mathbb{C}}(s)^n \eta_\kappa(s)
\end{eqnarray*}
by virtue of the identity
\begin{eqnarray*}
& &\frac{\Gamma_{\mathbb{R}}(s)^{r_1}\Gamma_{\mathbb{C}}(s)^{r_2}}{\Gamma_{\mathbb{R}}
(1-s)^{r_1}\Gamma_{\mathbb{C}}(1-s)^{r_2}} \\
& &=\left(\cos\frac{\pi s}{2}\right)^{r_1+r_2} \left(\sin\frac{\pi
s}{2}\right)^{r_2} \Gamma_{\mathbb{C}}(s)^n,
\end{eqnarray*}
which can be directly verified or obtained from (\ref{xi-k}), (\ref{xi-a}) and (\ref{kappa}). Hence, $\eta_\kappa$ satisfies the same functional equation (\ref{kappa}) as $\zeta_{\kappa}$ does:
\begin{equation*}
\eta_\kappa(1-s)= |D_{\kappa/\mathbb{Q}}|^{s-\frac{1}{2}}
\left(\cos\frac{\pi s}{2}\right)^{r_1+r_2} \left(\sin\frac{\pi
s}{2}\right)^{r_2} \Gamma_{\mathbb{C}}(s)^n \eta_\kappa(s).
\end{equation*}

\bigskip
\begin{theorem}\label{GRH-eqthm}
The Grand Riemann hypothesis is true if and only if $\zeta_{\kappa}(s)=\eta_{\kappa}(s)$.
\end{theorem}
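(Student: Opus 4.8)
The plan is to mirror, step for step, the proof of Theorem~\ref{RH-eqthm}, replacing $\zeta,\eta,h,\xi$ by $\zeta_\kappa,\eta_\kappa,h_\kappa,\xi_\kappa$ throughout; the one genuinely new ingredient is the residue computation at $s=1$, where the Analytic Class Number Formula (\ref{residue-formula}) must be invoked.

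For the sufficiency I would argue as before: from the defining relation (\ref{eta-k}), the only zeros of $\eta_\kappa$ in ${\rm Re}(s)<0$ are the trivial zeros coming from the poles of the $\Gamma$-factors, whereas every other zero of $\eta_\kappa$ is a zero of $h_\kappa$ and hence, by Property (b) of $h_\kappa$, lies on ${\rm Re}(s)=\frac12$. Therefore, if $\zeta_\kappa\equiv\eta_\kappa$, all nontrivial zeros of $\zeta_\kappa$ lie on the critical line, which is precisely the Grand Riemann Hypothesis.

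For the necessity, assume the Grand Riemann Hypothesis holds. Then $\zeta_\kappa$ and $\eta_\kappa$ share all of their zeros in $\mathbb{C}$, trivial and nontrivial alike. Since $(s-1)\zeta_\kappa$ is entire of order $1$ while $(s-1)\eta_\kappa$ is entire of order $\le 1$ (by Property (a) of $h_\kappa$ and (\ref{eta-k})), the order hypotheses give $\eta_\kappa(s)=e^{as+b}\zeta_\kappa(s)$ for some constants $a,b$, exactly as in Theorem~\ref{RH-eqthm}. Because the $\Gamma$-factors, the discriminant power $|D_{\kappa/\mathbb{Q}}|^{s/2}$ and the factor $\frac{s}{2}(s-1)$ all cancel between (\ref{xi-a}) and (\ref{eta-k}), one has $\frac{h_\kappa(s)}{\xi_\kappa(s)}=\frac{\eta_\kappa(s)}{\zeta_\kappa(s)}=e^{as+b}$. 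Applying the two symmetry relations $\xi_\kappa(1-s)=\xi_\kappa(s)$ in (\ref{xi-k}) and $h_\kappa(1-s)=h_\kappa(s)$ in (\ref{h-kappa}) then yields
$$e^{as+b}=\frac{h_\kappa(s)}{\xi_\kappa(s)}=\frac{h_\kappa(1-s)}{\xi_\kappa(1-s)}=e^{a(1-s)+b},$$
which forces $a=0$, so that $\eta_\kappa(s)=e^{b}\zeta_\kappa(s)$.

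It remains to prove $e^b=1$, and this is the only place I expect real computation. Multiplying by $(s-1)$ and letting $s\to 1$ gives ${\rm Res}_{s=1}\eta_\kappa=e^{b}\,{\rm Res}_{s=1}\zeta_\kappa$. From (\ref{eta-k}) and Property (c) of $h_\kappa$,
$${\rm Res}_{s=1}\eta_\kappa=\lim_{s\to 1}\frac{h_\kappa(s)}{\frac{s}{2}|D_{\kappa/\mathbb{Q}}|^{s/2}\Gamma_{\mathbb{R}}(s)^{r_1}\Gamma_{\mathbb{C}}(s)^{r_2}},$$
and using $\Gamma_{\mathbb{R}}(1)=1$ and $\Gamma_{\mathbb{C}}(1)=\frac1\pi$ together with the value $h_\kappa(1)=\frac{2^{r_1+r_2-1}}{w_\kappa}c_\kappa R_\kappa$ from Property (c), this limit collapses to $\frac{2^{r_1}(2\pi)^{r_2}c_\kappa R_\kappa}{w_\kappa\sqrt{|D_{\kappa/\mathbb{Q}}|}}$, which is exactly the right-hand side of (\ref{residue-formula}), i.e. ${\rm Res}_{s=1}\zeta_\kappa$. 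Hence $e^b=1$ and $\eta_\kappa\equiv\zeta_\kappa$. The main (and essentially only) obstacle is thus the bookkeeping in this last step: one must verify that the prefactor $\frac{2^{r_1+r_2-1}}{w_\kappa}c_\kappa R_\kappa$ chosen in (\ref{h-product}) combines with the $\Gamma$-factor values at $s=1$ to reproduce the class number formula exactly; everything else transfers verbatim from Theorem~\ref{RH-eqthm}.
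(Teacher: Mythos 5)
Your proposal is correct and follows essentially the same route as the paper's own proof: sufficiency via Property (b) of $h_\kappa$, necessity via the Hadamard-type factorization $\eta_\kappa=e^{as+b}\zeta_\kappa$, the symmetry relations (\ref{xi-k}) and (\ref{h-kappa}) forcing $a=0$, and the residue comparison at $s=1$ against the Analytic Class Number Formula (\ref{residue-formula}) forcing $e^b=1$. Your bookkeeping in the last step (using $\Gamma_{\mathbb{R}}(1)=1$, $\Gamma_{\mathbb{C}}(1)=\tfrac{1}{\pi}$, and $h_\kappa(1)=\tfrac{2^{r_1+r_2-1}}{w_\kappa}c_\kappa R_\kappa$) matches the paper's computation exactly.
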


\begin{proof}
The sufficiency is clear since all the zeros of $\eta_{\kappa}$ on ${\rm Re}(s)\ge 0$ lie on the line  ${\rm Re}(s)=\frac{1}{2}$ by the construction of $\eta_{\kappa}(s)$ and $h_{\kappa}(s)$ (cf. Property (b) of $h_{\kappa}$).

For the necessity, if the Grand Riemann Hypothesis holds, then $\zeta_{\kappa}(s)$ and $\eta_{\kappa}(s)$ have the same zeros in the entire complex plan; thus we have that
$\eta_{\kappa}(s)=e^{as+b}\zeta_{\kappa}(s)$ for some complex numbers $a, b$, in view of the fact that $\zeta_{\kappa}$ is of order $1$ and $\eta_{\kappa}$ is of order $\le 1$ (cf. Property (a) of $h_{\kappa}$). By applying (\ref{eta-k}), (\ref{h-kappa}), (\ref{xi-k}), and (\ref{xi-a}), we deduce that
\begin{eqnarray*}
& &e^{as+b}=\frac{\eta_{\kappa}(s)}{\zeta_{\kappa}(s)}=\frac{h_{\kappa}(s)}{\xi_{\kappa}(s)}\\
& &=\frac{h_{\kappa}(1-s)}{\xi_{\kappa}(1-s)}=\frac{\eta_{\kappa}(1-s)}{\zeta_{\kappa}(1-s)}=e^{a(1-s)+b},
\end{eqnarray*}
which implies that $a=0$. Then $\eta_{\kappa}(s)=e^{b}\zeta_{\kappa}(s)$ and thus \begin{equation}\label{limit}
(s-1)\eta_{\kappa}(s)=e^{b}(s-1)\zeta_{\kappa}(s).
\end{equation}

Next, by the Analytic Class Number Formula (\ref{residue-formula}), the residue of $\zeta_{\kappa}$ at $s=1$ is
$$\lim\limits_{s\to 1}(s-1)\zeta_{\kappa}=\frac{2^{r_1}(2\pi)^{r_2}c_{\kappa}R_{\kappa}}{w_{\kappa}{\sqrt{|D_{\kappa/\mathbb{Q}}|}}}.$$
On the other hand, by  (\ref{eta-k}) and Property (c) of $h_{\kappa}$ we have that
\begin{eqnarray*}
& &\lim\limits_{s\to 1}(s-1)\eta_{\kappa}=\lim\limits_{s\to 1}\frac{(s-1)
{h_\kappa(s)}}{{\frac{s}{2}(s-1)|D_{\kappa/\mathbb{Q}}|^{s/2}
\Gamma_{\mathbb{R}}(s)^{r_1}\Gamma_{\mathbb{C}}(s)^{r_2}}}\\
& &=\frac{2^{r_1+r_2-1}c_{\kappa}R_{\kappa}}{\frac{1}{2}\pi^{-r_2}w_{\kappa}\sqrt{|D_{\kappa/\mathbb{Q}}|}}\\
& &=\frac{2^{r_1}(2\pi)^{r_2}c_{\kappa}R_{\kappa}}{w_{\kappa}{\sqrt{|D_{\kappa/\mathbb{Q}}|}}}.
\end{eqnarray*}

Taking the limit $s\to 1$ in (\ref{limit}), we deduce that $1=e^b$. This proves that $\eta_{\kappa}=\zeta_{\kappa}.$
\end{proof}

To conclude the paper, we give the analogue of Theorem  2.4 for the Dedekind zeta function. We can write $\zeta_{\kappa}$ as a normal Dirichlet series,
$\zeta_{\kappa}=\sum\limits_{n=1}^{\infty}\frac{a(n)}{n^s}$, where the coefficients $a(n)$ now represent the number of ideals of norm $n$ and $a(1)=1$, corresponding to the ideal $\mathcal{O}_{\kappa}$. If the Grand Riemann Hypothesis is true, then by the necessary condition of Theorem 2.7, $\zeta_{\kappa}=\eta_{\kappa}$. Thus,  $\lim\limits_{\sigma\to +\infty}\eta_\kappa(s)=\lim\limits_{\sigma\to +\infty}\zeta_\kappa(s)=1.$ Conversely, if  $\lim\limits_{\sigma\to +\infty}\eta_\kappa(s)=1$, then by Theorem 2.5 and in view of the fact that all the conditions there are satisfied with $f=\eta_{\kappa}$ and $L=\zeta_{\kappa}$, we have the uniqueness that $\zeta_{\kappa}=\eta_{\kappa}$ and thus the Grand Riemann Hypothesis holds by the sufficient condition of Theorem 2.7. This shows that we have the following

\begin{theorem}
The Grand Riemann Hypothesis is true if and only if
$$
    \lim_{\sigma\to +\infty}\eta_\kappa(s)=1.
$$
\end{theorem}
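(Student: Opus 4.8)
The plan is to derive the equivalence by chaining Theorem 2.7, which asserts that the Grand Riemann Hypothesis holds if and only if $\zeta_\kappa\equiv\eta_\kappa$, with the uniqueness result of Theorem 2.5, in complete parallel with the derivation of Theorem 2.4 from Theorems 2.1 and 2.3 in the classical case. For the necessity, I would assume the Grand Riemann Hypothesis; the necessity half of Theorem 2.7 then gives $\zeta_\kappa\equiv\eta_\kappa$. Writing $\zeta_\kappa(s)=\sum_{n=1}^\infty a(n)n^{-s}$ with $a(1)=1$ and absolute convergence for $\sigma>1$, the $n=1$ term dominates and the tail is $O(2^{-\sigma})$ (exactly the elementary estimate already used in the proof of Theorem 2.5), so $\lim_{\sigma\to+\infty}\zeta_\kappa(s)=1$ and hence $\lim_{\sigma\to+\infty}\eta_\kappa(s)=1$.

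For the sufficiency, I would assume $\lim_{\sigma\to+\infty}\eta_\kappa(s)=1$ and apply Theorem 2.5 with $f=\eta_\kappa$, $L=\zeta_\kappa$, and the exceptional set $G=\emptyset$; the task then reduces to checking the hypotheses. That $\eta_\kappa$ is a nonconstant meromorphic function of order $\le 1$ follows from Property (a) of $h_\kappa$ (so that $(s-1)\eta_\kappa$ is entire of order $\le 1$) together with the facts that $h_\kappa$ carries infinitely many zeros and that $\eta_\kappa$ has a genuine simple pole at $s=1$, as computed in the proof of Theorem 2.7. That $\eta_\kappa$ and $\zeta_\kappa$ obey the common functional equation (\ref{kappa}) is exactly the identity verified in the display immediately preceding Theorem 2.7, and $\zeta_\kappa$ lies in the (extended) Selberg class. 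With $G=\emptyset$ the growth condition (\ref{G-notation}) holds trivially (its left-hand side is $0$), and the inclusion $Z^+(\eta_\kappa)\subseteq Z^+(\zeta_\kappa)$ is immediate from Property (b) of $h_\kappa$ and the defining relation (\ref{eta-k}): the nontrivial zeros of $\eta_\kappa$ are precisely $s_\nu$ and $\overline{s_\nu}$, which are by definition zeros of $\zeta_\kappa$ on the critical line. Theorem 2.5 then forces $\eta_\kappa\equiv\zeta_\kappa$, and the sufficiency half of Theorem 2.7 yields the Grand Riemann Hypothesis.

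The step I expect to require the most care is the verification of the hypotheses of Theorem 2.5, principally the zero inclusion and the coincidence of functional equations. However, the functions $h_\kappa$ and $\eta_\kappa$ were constructed precisely so that these properties hold, so this verification is bookkeeping rather than a genuine obstacle; the one substantial ingredient, the uniqueness Theorem 2.5 itself (proved via the growth estimate (\ref{product}) and Phragm\'en--Lindel\"of), is already in hand. Thus the entire theorem follows as a formal consequence of Theorems 2.5 and 2.7 together with the elementary asymptotics of the Dirichlet series for $\zeta_\kappa$.
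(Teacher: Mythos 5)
Your proposal is correct and follows essentially the same route as the paper: the necessity is obtained from the necessary half of Theorem 2.7 together with the elementary fact that $\lim_{\sigma\to+\infty}\zeta_\kappa(s)=1$ from its Dirichlet series with $a(1)=1$, and the sufficiency by applying Theorem 2.5 with $f=\eta_\kappa$, $L=\zeta_\kappa$ (the paper leaves the hypothesis check implicit, with $G$ empty just as you take it) and then invoking the sufficient half of Theorem 2.7. Your more explicit verification of the hypotheses of Theorem 2.5 is just an expanded version of the paper's one-line claim that ``all the conditions there are satisfied.''
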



\begin{thebibliography}{99}
\addcontentsline{toc}{chapter}{Bibliography}
%\bibitem{GL} R. Gao and B.Q. Li, An answer to a question on value distribution of the Riemann zeta-function, International J. Math. 23(2012), no.4, 1250044, 9 pp.
\bibitem{BC} S. Bochner and K. Chandrasekharan, On Riemann's functional equation, Ann. of Math. 63 (1956), 336-360.
\bibitem{CY} M. Cardwell and Z. Ye, A uniqueness theorem of L-functions with rational moving targets, J. Math
Anal. 5(2014), 16-19.
\bibitem{GHK} S.M. Gonek, J. Haan, and H. Ki, A uniqueness theorem for functions in the extended Selberg class, Math. Z. 278 (2014), 995-1004.
\bibitem{Ha} H. Hamburger, \"Uber die Riemannsche Funktionalgleichung der $\zeta$-Funktion, Math. Z. 10(1921), 240-254.
\bibitem{HLY} P.C. Hu, P. Li and C.C. Yang, \textit{Unicity of meromorphic mappings}, Kluwer Academic Publishers, 2003.
\bibitem{HY} P.C. Hu and C.C. Yang,  \textit{Distribution theory of algebraic numbers}, Walter de Gruyter, 2008.
\bibitem{Ki} H. Ki, A remark on the uniqueness of the Dirichlet series with a Riemann-type function equation, Advances in Math. 231(2012), 2484-2490.
\bibitem{KL} H. Ki and B.Q. Li, On uniqueness in the extended Selberg class of Dirichlet series, Proc. Amer. Math. Soc. 141 (2013), 4169-4173.
\bibitem{Kn} M.I. Knopp, On Dirichlet series satisfying Riemann's functional equation, Invent. Math. 372(1994), 361-372.
\bibitem{Li1} B.Q. Li,  On common zeros of L-functions, Math. Z. 272(2012), 1097-1102.
\bibitem{Li2} B.Q. Li, A uniqueness theorem for Dirichlet series satisfying a Riemann type functional equation, Advances in Math. 226(2011), 4198-4211.
%\bibitem{Na} W. Narkiewicz, Elementary and analytic theory of algebraic numbers, Springer-Verlag, 1990.
\bibitem{Ne} J. Neukireh, \textit{Algebraic Number Theory}, Springer-Verlag, Berlin, 1999.
\bibitem{Se} Selberg, A., Old and new conjectures and results about a class of Dirichlet series, Proceedings of the Amalfi Conference on Analytic Number Theory (Maiori, 1989), E. Bombieri et al. (eds.), Univ. Salerno, Salerno, 1992,  367-385.
\bibitem{St} J. Steuding, \textit{Value distribution of $L$-functions}, Lecture Notes in Mathematics, 1877, Springer-Verlag, 2007.
\bibitem{T1} E.C. Titchmarsh,  \textit{The Theory of Functions}, Second Edition, Oxford University Press, 1968.
\bibitem{T2} E.C. Titchmarsh,  \textit{The Theory of the Riemann Zeta Function}, Second Edition, Oxford University Press, 1988.
\bibitem{YY} C.C. Yang and H.X. Yi, \textit{Uniqueness Theory of Meromorphic Functions}, Kluwer Academic Publishers, 2003.


\end{thebibliography}
\end{document}